\def \c{\mathbb{C}}
\def \r{\mathbb{R}}
\def \p{\mathbb{P}}
\def \.{\cdot}
\def \Lie{\textup{Lie}}
\theoremstyle{plain}
\newtheorem{Th}{Theorem}[section]
\newtheorem{Prop}[Th]{Proposition}
\newtheorem{Cor}[Th]{Corollary}
\theoremstyle{definition}
\newtheorem{Ex}[Th]{Example}
\newtheorem{Def}[Th]{Definition}
\begin{document}
\title{Momentum graphs, Chinese remainder theorem and the surjectivity of the restriction map}
\author{James B. Carrell\\ Kiumars Kaveh}

\begin{abstract}
We consider a Chinese remainder theorem for (labeled) graphs. For $X$ a GKM $T$-variety and $Y$ an invariant subvariety, we use this to give a condition for surjectivity of the restriction map $H^*(X) \to H^*(Y)$. In particular, this applies to certain invariant subvarieties in a smooth toric variety.
%Let $X$ be a $GKM$ variety and $Y$ an invariant subvariety. We give sufficient conditions for surjectivity of restriction map $H^*(X) \to H^*(Y)$. In particular, this applies to certain invriant subvarieties in a simplicial toric variety. To do this we consider a variant of Chinese remainder theorem for labeled graphs.  
\end{abstract}

\thanks{The second author is partially supported by a National Science Foundation Grant (Grant ID: DMS-1601303).}

%\thanks{The second author is partially supported by .}

%\keywords{}
\subjclass[2010]{55N91, 14L30, 14M25}
%\date{\today}

\maketitle
%{\footnotesize {\bf Abstract}.}\\
%\noindent{\it Key words:} spherical variety, flag variety, toric
%variety, cohomology ring, moment polytope,
%string polytope.\\
%\noindent{\it Subject Classification: }
\section{Introduction}
Throughout the base field is $\c$. Let $X$ be a smooth projective variety equipped with an algebraic action of a torus $T$ such that $(X, T)$ is equivariantly formal and moreover has finitely many fixed points and finitely many $T$-invariant curves. Such an $X$ is usually called a \emph{GKM $T$-variety} after Goresky, Kottwitz and MacPherson \cite{GKM}. There is a beautiful and purely combinatorial description of the cohomology ring $H^*(X, \c)$ in terms of the graph of fixed points and invariant curves. Many important classes of varieties such as toric varieties, flag varieties and more generally spherical varieties are examples of GKM varieties. 

In this note we address a natural question:
If $Y$ is a $T$-invariant closed
subvariety of $X$, when is the restriction map $i^*: H^*_T(X, \c) \to H^*_T(Y, \c)$ induced by the inclusion $i: Y \hookrightarrow X$ surjective? We consider a variant of the Chinese remainder theorem to give a sufficient conditions for this to happen. This in particular gives an easy to verify condition in the case when $X$ is a toric variety. Our surjectivity criterion is also applicable to torus invariant subvarieties in a flag variety such as Richardson varieties.   

The fact that the existence of a group action can yield surjectivity results has already
been noticed in the case $X$ is a smooth projective variety over $\c$ admitting an
action of the triangular group $\mathcal{B}$ of $2\times 2$ upper triangular matrices
such that $X^{\mathcal{B}}$ is a point. In this case, 
for any smooth $\mathcal{B}$-stable 
subvariety $Y$ of $X$, the restriction map
$H^*(X,\c)\to H^*(Y,\c)$ is surjective  (see  \cite{TG}).

\section{Cohomology of a graph}
Fix a natural number $n$ and let $R$ be the polynomial ring $\c[x_1,\ldots, x_n]$.
\begin{Def} A {\em labeled graph} is a simple graph $\Gamma$ with vertex set $V$ and edge set $E$. %If $v\in V$,  $E(v)$ will denote the set of edges containing $v$. %Note  $E(v)=E^+(v)\cup E^-(v)$, where $E^+(v)$ and  $E^-(v)$ are the edges with initial point $v$ and  terminal point $v$ respectively.
For $v,w\in V$,
a directed edge from $v$ to $w$ is denoted $(v,w)$. %If $(v,w)$ is a directed edge,
%then $-(x,y)$ denotes the opposite edge; that is the same edge but  with  opposite direction. 
We require that each directed edge $(v,w)$ is labelled by a homogeneous linear polynomial $e(v,w)\in R$, and furthermore, we put $e(v,w)=-e(w,v)$.   
\end{Def}
 
\begin{Def}
Let $\Gamma = (V, E)$ be a labeled graph. The cohomology of the graph
$\Gamma$ denoted by $H^*(\Gamma)$ is the set of all maps $f: V \to
R$ such that for every labeled edge $(v,w)$,
$$f(v) \equiv f(w) ~(mod~ e(v,w)),$$ that is, $f(v) - f(w)$ is
divisible by $e(v,w)$.
\end{Def} 

\section{The GKM-graph of a torus action}
Let $X$ be a compact $d$-dimensional complex manifold equipped with
a faithful action of a complex torus $T$ of dimension $n$.

\begin{Def}
We say that $X$ is a GKM-manifold if it has the following properties:
\begin{itemize}
\item[(i)] $X^T$ is finite.
\item[(ii)] For every $x \in X^T$, the isotropy representation of $T$ on $T_xX$
 has distinct weights $w_{i,p} \in \Lie(T)^*, i = 1, \ldots ,d$, which  are pairwise
linearly independent.
\end{itemize} 
\end{Def}

%Then the number of $T$-invariant curves in $X$ is
Let $E(X)$ denote the set of all $T$-invariant curves in $X$. 
Each $C\in E(X)$ is isomorphic to a copy of
$\c\p^1$, so $C^T=\{v,w\}$ for a unique $v\ne w$. It is well known that if the weight of the 
isotropy representation of $T$ on $T_vC$ is $\omega \in \Lie(T)^*$, then the weight of the
isotropy representation on $T_wC$ is $-\omega$.
%When $C^T=\{v,w\}$, it's convenient to denote $C$ by $C(v,w)$. 
Choosing a one parameter subgroup $\lambda$ of $T$
whose fixed point set is $X^T$,
one may define a direction on $C$ by
designating $C$'s initial point to be $\lim _{a\to 0} \lambda(a) z=v$
for some (or any) $z \in C -C^T$. For the next definition, fix a one parameter subgroup $\lambda$ as above.
 
\begin{Def}
Let $X$ be a GKM $T$-manifold. The GKM graph of $X$, denoted by
$\Gamma_X$, is the labeled graph whose vertices are the fixed points
and the edges are the $T$-invariant curves directed by $\lambda$ as above. The label assigned to an
invariant curve $C(v,w)$ with initial point $v$  is the weight $\omega\in \Lie(T)^*$
of the isotropy action of $T$ defined above.
\end{Def}
One can show that $\Gamma_X$ is a $d$-regular graph.

The following gives a combinatorial description of the equivariant cohomology of a GKM manifold \cite{GKM}. 
\begin{Th}[Goresky-Kottwitz-MacPherson] 
Let $X$ be a GKM $T$-manifold. Then the equivariant cohomology algebra $H^*_T(X, \c)$ is isomorphic to the graph cohomology $H^*(\Gamma_X)$. 
\end{Th}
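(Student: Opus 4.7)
The plan is to identify $H^*_T(X,\c)$ with its image under restriction to the fixed-point set $X^T$ and then to describe that image by the compatibility conditions defining $H^*(\Gamma_X)$. My approach rests on two pillars of equivariant cohomology: equivariant formality, which is part of the GKM hypothesis, and the Chang--Skjelbred lemma, which expresses the image of $H^*_T(X) \to H^*_T(X^T)$ purely in terms of the one-skeleton $X_1 := \{x \in X : \dim \overline{T \cdot x} \leq 1\}$.

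First, because $(X,T)$ is equivariantly formal, the localization theorem of Atiyah--Bott and Borel shows that the restriction map $\iota^*: H^*_T(X,\c) \to H^*_T(X^T,\c)$ is injective. Since $X^T$ is finite, $H^*_T(X^T,\c) \cong \bigoplus_{v \in X^T} R$ after identifying $H^*_T(\text{pt},\c)$ with $R = \c[x_1,\ldots,x_n]$ via a basis of $\Lie(T)^*$. Thus each equivariant class is faithfully recorded by a function $V \to R$, which is exactly the ambient space in which $H^*(\Gamma_X)$ sits.

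Next, the Chang--Skjelbred lemma identifies the image of $\iota^*$ with the image of $H^*_T(X_1,\c) \to H^*_T(X^T,\c)$. Now $X_1$ is a union of $T$-invariant curves $C(v,w) \in E(X)$, each isomorphic to $\c\p^1$ with fixed points $v,w$ and isotropy weight $\pm e(v,w)$. A direct computation on $\c\p^1$, via Mayer--Vietoris using its two standard affine charts, shows that the image of $H^*_T(\c\p^1,\c) \to H^*_T(\{v,w\},\c)$ is precisely $\{(g_v,g_w) \in R \oplus R : g_v \equiv g_w \pmod{e(v,w)}\}$. Since distinct invariant curves meet only in fixed points, a Mayer--Vietoris argument on $X_1$ glues these local descriptions into the simultaneous compatibility conditions that define $H^*(\Gamma_X)$.

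The main obstacle is the Chang--Skjelbred step: showing that the image in $H^*_T(X^T,\c)$ really is controlled by the one-skeleton. One inducts along the orbit-type filtration $X_0 \subset X_1 \subset \cdots \subset X_d = X$, using equivariant formality to force the connecting homomorphism in the long exact sequence of each pair $(X_i, X_{i-1})$ to vanish for $i \geq 2$. This vanishing rests on the fact that points of $X_i \setminus X_{i-1}$ have isotropy of codimension exactly $i \geq 2$, so the relative equivariant cohomology has enough linear forms in its annihilator to be killed by the formality hypothesis. Everything else reduces either to the elementary $\c\p^1$ computation or to routine bookkeeping on $\Gamma_X$.
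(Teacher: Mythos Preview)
The paper does not supply its own proof of this theorem; it merely records the statement and cites the original source \cite{GKM}. So there is nothing to compare against at the level of argument.

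Your sketch is the standard proof strategy and is essentially correct: equivariant formality gives injectivity of $\iota^*$, the Chang--Skjelbred lemma reduces the image to the one-skeleton, and the $\c\p^1$ computation yields the edge congruence conditions. One small caution: in this paper's Definition~3.1 of a GKM manifold only conditions (i) and (ii) appear, and equivariant formality is not listed there (it is mentioned separately in the Introduction). Since your argument relies on formality both for injectivity and for the Chang--Skjelbred step, you should state explicitly that you are taking it as part of the hypothesis, rather than asserting it is ``part of the GKM hypothesis'' as given in the definition. Beyond that, the outline is sound, though the final paragraph on the orbit-type filtration is more of a signpost than a proof; in a fully written version you would need to make precise why the relative groups $H^*_T(X_i, X_{i-1})$ are torsion over $R$ for $i \geq 2$.
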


In many cases, the graph $\Gamma_X$ of a GKM manifold $X$ can be realized as sitting in a Euclidean space (namely $\Lie(T)^*$) with vertices as points and labels as vectors in the same directions as lines joining the vertices.  
\begin{Prop}    \label{prop-moment-graph}
Let $X$ be a K\"{a}hler GKM $T$-manifold. Also assume that the
action is Hamiltonian. Then the moment map $\mu: X \to
\Lie(T)^*$ maps an edge connecting two vertices $v,w \in X^T$ to the
line segment joining $\mu(v), \mu(w) \in \Lie(T)^*$. Moreover, the
label $e(v,w)$ is an integral vector in the direction of this
line segment.
\end{Prop}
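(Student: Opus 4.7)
The plan is to restrict the moment map to each $T$-invariant curve $C = C(v,w)$ and reduce to the standard picture of a weighted circle action on $\c\p^1$. Let $\iota: C \hookrightarrow X$ be the inclusion. Since $C$ is a compact complex submanifold of the K\"ahler manifold $X$ and is $T$-invariant, the pulled-back K\"ahler form makes $C$ into a K\"ahler submanifold. A short computation using that the fundamental vector field on $X$ of any $\xi \in \Lie(T)$ restricts to the fundamental vector field on $C$ shows that $\mu \circ \iota : C \to \Lie(T)^*$ is a moment map for the induced Hamiltonian $T$-action on $C$.

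Next, the $T$-action on $C \cong \c\p^1$ factors through a one-dimensional quotient of $T$: the kernel $T' \subset T$ of this action is a codimension-one subtorus with $\Lie(T') = \ker(\omega)$, where $\omega = e(v,w)$ is the weight of the isotropy representation on $T_v C$. For any $\xi \in \Lie(T')$, the fundamental vector field $\xi^{\#}$ vanishes on $C$, so $d\langle \mu \circ \iota, \xi \rangle = 0$ on $C$; hence $\mu(C) - \mu(v)$ lies in the annihilator of $\Lie(T')$ in $\Lie(T)^*$, which is the line $\r \cdot \omega$. By the Atiyah-Guillemin-Sternberg convexity theorem applied to the compact connected $C$, the image $\mu(C)$ is the convex hull of $\{\mu(v), \mu(w)\}$, hence the line segment joining them.

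To match the direction of this segment with $\omega$ and verify integrality, I would choose a coordinate $z$ on $C \setminus \{w\}$ in which $v$ corresponds to $z = 0$ and $T$ acts by $t \cdot z = \chi_\omega(t)\, z$, where $\chi_\omega: T \to \c^*$ is the character with differential $\omega$. A direct calculation using a $T$-invariant K\"ahler potential on $\c\p^1$ yields
$$\mu \circ \iota(p) = \mu(v) + \omega \cdot h(p),$$
with $h: C \to [0, L]$ monotone, $h(v) = 0$, $h(w) = L > 0$; hence $\mu(w) - \mu(v)$ is a positive multiple of $\omega$. Integrality is automatic, since $\omega$ is the derivative of a character $T \to \c^*$ and therefore lies in the weight lattice inside $\Lie(T)^*$. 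The main obstacle lies in this last step, namely identifying the direction of $\mu(C)$ precisely as the line spanned by $\omega$; once convexity has produced the segment, the identification reduces to the familiar weighted-circle-action computation on $\c\p^1$.
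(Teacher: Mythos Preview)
The paper states this proposition without proof, treating it as a standard fact from equivariant symplectic geometry. Your argument is correct and is the standard one: restrict the moment map to the invariant sphere $C \cong \c\p^1$, identify the affine line containing $\mu(C)$ via the Lie algebra of the stabilizer subtorus, and use convexity (or simply connectedness of the image inside a line) to obtain the segment $[\mu(v),\mu(w)]$. The last paragraph is also fine; there is no genuine obstacle, since the moment map for a linear circle action on $\c\p^1$ is an elementary computation and the weight $\omega$ lies in the weight lattice by definition.
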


\section{A Chinese remainder theorem for subgraphs}
%\section{A variant of Chinese remainder theorem}  
We will need the following version of the Chinese remainder theorem. The proof is very similar to the usual Chinese remainder theorem in a polynomial ring (cf. \cite[Section 7.6]{DF}). 
%For the sake of completeness we provide the straightforward proof.
\begin{Th} \label{th-CRT}
Let $e_1, \ldots, e_m~ (m \leq n)$ be linearly independent linear
polynomials in $R = \c[x_1, \ldots, x_n]$. Also let $a_1, \ldots,
a_m$ be polynomials in $R$. Then the system of congruences
\begin{align*} \label{for-CRT}
f &\equiv a_1 ~(mod~ e_1)\cr &... \cr f &\equiv a_m ~(mod~ e_m)
\end{align*}
has a solution $f \in R$ if and only if for every $i, j$ ($1 \leq i,
j \leq m$) we have $a_i - a_j \in \langle e_i, e_j \rangle$ (where
$\langle e_i,e_j \rangle$ denotes the ideal in $R$ generated by the
$e_i$ and $e_j$).
\end{Th}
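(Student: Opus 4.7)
The plan is to argue by induction on $m$. The necessity direction is immediate: for any $f$ satisfying all the congruences, $a_i - a_j = (f - a_j) - (f - a_i) \in (e_i) + (e_j) = \langle e_i, e_j \rangle$. For the base case $m = 2$, the compatibility hypothesis supplies an expression $a_1 - a_2 = g_1 e_1 + g_2 e_2$, and then $f := a_1 - g_1 e_1 = a_2 + g_2 e_2$ visibly solves both congruences.

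For the inductive step, I would apply the hypothesis for $m - 1$ to obtain some $g \in R$ with $g \equiv a_i \pmod{e_i}$ for $i = 1, \ldots, m - 1$, and then search for $f$ of the form $f = g + h\, e_1 \cdots e_{m-1}$ with $h \in R$. The first $m - 1$ congruences are then automatic, and the remaining condition becomes the single congruence $h\, e_1 \cdots e_{m-1} \equiv a_m - g \pmod{e_m}$, to be solved in the quotient ring $\bar R := R / (e_m)$. Since $e_m$ is a nonzero linear form, $\bar R$ is itself a polynomial ring in $n - 1$ variables, and the images $\bar e_1, \ldots, \bar e_{m-1}$ remain linearly independent linear forms in $\bar R$ (any dependence there would contradict the independence of $e_1, \ldots, e_m$ in $R$). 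A linear change of coordinates in $\bar R$ turns $\bar e_1, \ldots, \bar e_{m-1}$ into coordinate functions, so they form a regular sequence and $(\bar e_1) \cap \cdots \cap (\bar e_{m-1}) = (\bar e_1 \cdots \bar e_{m-1})$. The compatibilities $a_m - a_i \in \langle e_m, e_i \rangle$ combined with $g \equiv a_i \pmod{e_i}$ force $\bar a_m - \bar g \equiv 0 \pmod{\bar e_i}$ for each $i < m$, hence $\bar a_m - \bar g$ is divisible by $\bar e_1 \cdots \bar e_{m-1}$; lifting this quotient to $R$ produces the required $h$.

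The only real obstacle is the divisibility step at the end, namely deducing ``divisible by $\bar e_1 \cdots \bar e_{m-1}$'' from ``divisible by each $\bar e_i$ separately''. Passing to $\bar R = R/(e_m)$ is exactly what makes this clean, since after coordinatization it reduces to the elementary fact that for distinct variables $y_1, \ldots, y_{m-1}$ one has $(y_1) \cap \cdots \cap (y_{m-1}) = (y_1 \cdots y_{m-1})$ in a polynomial ring. Everything else in the inductive step is bookkeeping, and the proof mirrors the standard CRT argument except that comaximality of the ideals $(e_i)$ is replaced by the weaker pairwise compatibility hypothesis, which the quotient-ring reduction converts into usable divisibility information.
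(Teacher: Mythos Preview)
Your argument is correct. The paper does not actually supply a proof of this theorem; it only remarks beforehand that ``the proof is very similar to the usual Chinese remainder theorem in a polynomial ring'' and cites \cite[Section 7.6]{DF}. Your inductive scheme, with the passage to the quotient $\bar R = R/(e_m)$ to convert the pairwise compatibilities $a_m - a_i \in \langle e_m, e_i\rangle$ into divisibility of $\bar a_m - \bar g$ by each $\bar e_i$ and hence by their product, is exactly the kind of adaptation of the standard CRT induction that the paper's remark invokes. In short, you have filled in the omitted details in the intended spirit; there is nothing further in the paper to compare against.
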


Now let $\Gamma$ be a connected labeled graph. A subgraph
$\Gamma' = (V', E')$ is a called the {\it subgraph induced by} $V'$
if whenever two vertices of $v,w \in V'$ are adjacent in $\Gamma$ then they are also adjacent in $\Gamma'$ with the same label
$e(v,w)$.
%Suppose $\Gamma' = (V',E')$ is a connected labeled induced subgraph of $\Gamma$. 
Let $g:V'\to R$ be an element of $H^*(\Gamma')$. We
wish to extend $g$ to an element of $H^*(\Gamma)$, i.e. find an
element $f:V \to R$ in $H^*(\Gamma)$ such that $f(v) = g(v)$
whenever $v \in \Gamma'$. We call $f$ a solution to the Chinese remainder problem for the triple $(\Gamma, \Gamma', g)$.

\begin{Prop} \label{prop-CRT-subgraph}
Let $\Gamma$ be a connected labeled graph and let $\Gamma' = \Gamma
- \{v\}$ be an induced subgraph which is missing only one vertex.
That is, $V = V' \cup \{v\}$. Then for a cohomology class $g \in
H^*(\Gamma')$ the Chinese remainder problem $(\Gamma, \Gamma', g)$
has a solution if and only if for any two vertices $v_1,v_2 \in V'$
which are adjacent to $v$ with labels $e_1$ and $e_2$
respectively, we have $g(v_1) - g(v_2) \in \langle e_1, e_2
\rangle$.
\end{Prop}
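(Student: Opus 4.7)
My plan is to handle the two implications separately: the forward direction is a short direct computation, and the backward direction reduces to an application of Theorem \ref{th-CRT}.

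For necessity, I would start with any $f \in H^*(\Gamma)$ extending $g$. For every neighbor $v_i$ of $v$ in $\Gamma$ with edge label $e_i = e(v,v_i)$, the cohomology condition along $(v, v_i)$ gives $f(v) - g(v_i) = f(v) - f(v_i) \in (e_i)$. Subtracting the relations obtained at two such neighbors $v_1, v_2$ yields
\[
g(v_1) - g(v_2) = \bigl(f(v) - g(v_2)\bigr) - \bigl(f(v) - g(v_1)\bigr) \in (e_1) + (e_2) = \langle e_1, e_2 \rangle,
\]
which is precisely the stated pairwise compatibility.

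For sufficiency, I would define $f$ on $V' = V \setminus \{v\}$ by $f|_{V'} = g$. Since $\Gamma'$ is the subgraph induced by $V'$, every edge of $\Gamma$ not incident to $v$ is already an edge of $\Gamma'$, and the congruence along it holds automatically from $g \in H^*(\Gamma')$. The problem is therefore reduced to choosing $f(v) \in R$ so that $f(v) \equiv g(v_i) \pmod{e_i}$ for each neighbor $v_i$ of $v$. Setting $a_i := g(v_i)$, this is exactly the system of congruences treated by Theorem \ref{th-CRT}, whose pairwise hypothesis $a_i - a_j \in \langle e_i, e_j \rangle$ is our assumption. Invoking Theorem \ref{th-CRT} produces the required $f(v)$ and completes the extension.

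The main delicate point---and what I would check most carefully---is that Theorem \ref{th-CRT} presupposes that the $e_i$ are linearly independent linear forms in $R$ (so in particular $m \leq n$). This holds automatically in the motivating examples: in a smooth toric variety the edge labels at a fixed point form a basis of the character lattice, and more generally for the GKM $T$-manifolds to which the authors intend to apply this proposition, the isotropy weights at each fixed point are linearly independent. Under this condition the CRT step is legitimate and the argument closes as written; without it one would have to supplement the proof with a further analysis of redundancies among the labels at $v$.
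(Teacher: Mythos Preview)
Your argument is correct and is exactly the paper's approach: the paper's proof is the single sentence ``The proposition follows from Theorem~\ref{th-CRT},'' and you have simply unpacked both implications of that reduction explicitly. One small correction to your closing remark: the GKM condition in Definition~3.1(ii) only requires the isotropy weights at a fixed point to be \emph{pairwise} linearly independent, so in the flag variety $G/B$ (Section~6) the edge labels at a vertex are all the positive roots and are certainly not linearly independent when the rank exceeds one---the hypothesis of Theorem~\ref{th-CRT} is therefore not literally satisfied there, a point the paper also leaves implicit.
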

\begin{proof}
The proposition follows from Theorem \ref{th-CRT}.
\end{proof}

%\begin{Def}  \label{def-CR-pair}
We say $(\Gamma, \Gamma')$ is a \emph{Chinese remainder pair} if for any $g \in H^*(\Gamma')$, the Chinese remainder problem for $(\Gamma, \Gamma', g)$ has a solution.
%\end{Def}
%The following is an immediate corollary of the definition. 
%\begin{Prop}   \label{prop-seq-CR}
Suppose there exists a sequence of induced subgraphs $$\Gamma' = \Gamma_0 \subset \cdots \subset \Gamma_r = \Gamma$$ such that $(\Gamma_i, \Gamma_{i-1})$ is a Chinese remainder pair, for $i=1, \ldots, r$. Then the restriction map $H^*(\Gamma) \to H^*(\Gamma')$ is surjective. 
%\end{Prop}

Suppose $\Gamma$ is a labeled graph that is embedded in some Euclidean space $\r^n$. That is, the set of vertices of $\Gamma$ is a finite subset of $\r^n$ and if two vertices are connected the label $e(v, w)$ is a multiple of the vector $w-v$ (see Proposition \ref{prop-moment-graph}). 

\begin{Prop}  \label{prop-Euclidean-graph}
Let $\Gamma$ be as above. Let $\Gamma' = \Gamma \setminus \{v\}$ be an induced subgraph which is missing only one vertex. Suppose for any two vertices $v_1$, $v_2$ in $\Gamma'$ that are adjacent to $v$ we know that $v_1$ can be joined to $v_2$ with a sequence of edges that lie in the plane through $v$, $v_1$ and $v_2$. Then $(\Gamma, \Gamma')$ is a Chinese remainder pair.
\end{Prop}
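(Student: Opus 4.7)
The plan is to reduce to Proposition \ref{prop-CRT-subgraph}. Fix $g \in H^*(\Gamma')$ and a pair of vertices $v_1, v_2 \in V'$ adjacent to $v$ with labels $e_1 = e(v,v_1)$ and $e_2 = e(v,v_2)$. By that proposition, it suffices to verify that $g(v_1) - g(v_2) \in \langle e_1, e_2 \rangle$, since $v$ is the only vertex we need to extend over.

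The main observation concerns labels of edges contained in the affine plane through $v, v_1, v_2$. Let $P \subset \r^n$ be the linear $2$-plane spanned by the vectors $v_1 - v$ and $v_2 - v$; it is genuinely two-dimensional because the labels $e_1, e_2$, being $\c$-multiples of these two vectors, are linearly independent. Under the identification of a vector $u \in \r^n$ with the linear polynomial $\sum u_j x_j \in R$, the $\c$-linear span of $e_1, e_2$ coincides with the complexification of the image of $P$. Hence any linear polynomial attached to a vector of $P$ is a $\c$-linear combination of $e_1$ and $e_2$, and in particular lies in the ideal $\langle e_1, e_2 \rangle$.

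With this in hand, I would invoke the hypothesis. Let $v_1 = u_0, u_1, \ldots, u_k = v_2$ be a path in $\Gamma'$ whose edges lie in the affine plane through $v, v_1, v_2$. For each $i$, the direction $u_i - u_{i-1}$ lies in $P$, so the label $e(u_{i-1}, u_i)$ (being proportional to this direction) lies in $\langle e_1, e_2 \rangle$ by the previous paragraph. Because $g$ is a cohomology class on $\Gamma'$, the difference $g(u_{i-1}) - g(u_i)$ is divisible by $e(u_{i-1}, u_i)$ and therefore sits in $\langle e_1, e_2 \rangle$. Telescoping then yields $g(v_1) - g(v_2) = \sum_{i=1}^{k} \bigl( g(u_{i-1}) - g(u_i) \bigr) \in \langle e_1, e_2 \rangle$, which is exactly the hypothesis of Proposition \ref{prop-CRT-subgraph}.

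I expect no substantive obstacle. The only non-routine step is the linear-algebra identification in the second paragraph: once one sees that vectors in $P$ correspond exactly to $\c$-combinations of $e_1$ and $e_2$, the result is a matter of summing divisibility relations along the prescribed path. The geometric hypothesis is precisely what forces the intermediate labels into $\langle e_1, e_2 \rangle$ rather than some larger ideal, which is what makes the telescoping argument land in the right place.
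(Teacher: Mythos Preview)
Your proposal is correct and follows essentially the same route as the paper: reduce to Proposition~\ref{prop-CRT-subgraph}, run along the given path in the plane, observe that each intermediate edge label lies in $\langle e_1, e_2\rangle$, and telescope. Your write-up is simply more explicit about the linear-algebra identification and the telescoping sum than the paper's version.
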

\begin{proof}
Let $g \in H^*(\Gamma')$ and let $g_1$ and $g_2$ be the
polynomials in $g$ attached to the vertices $v_1$ and $v_2$ respectively. By Proposition \ref{prop-CRT-subgraph} we need to show that $g_1 - g_2$
is in $\langle e_1,e_2 \rangle$ where $e_i$ is the label of the edge $(v, v_i)$, $i=1, 2$. By assumption $v_1$ and
$v_2$ are connected by a path of edges in the plane through $v$, $v_1$ and $v_2$. Let us denote the labels of these edges by $l_1, \ldots, l_r$. Since $g \in H^*(\Gamma')$ it follows that $v_1 - v_2 \in \langle l_1, \ldots, l_r \rangle \subset \langle e_1, e_2 \rangle$ and therefore $g_1 - g_2 \in \langle e_1, e_2
\rangle$. This proves the proposition.
\end{proof}

\section{Example: graph of a polytope and toric varieties}
Let $X_\Sigma$ be an $n$-dimensional smooth projective toric variety with associated complete fan $\Sigma$. Let $\Delta$ be
a polytope which is normal to the fan $\Sigma$. Then $\Delta$ is a simple polytope, i.e. the cone at each vertex is a simplicial cone. One knows that the variety $X_\Sigma$ is a GKM $T$-variety and its corresponding graph $\Gamma_{X_\Sigma}$ can be identified with the graph 
$\Gamma_\Delta$ corresponding to (the $1$-skeleton of) the polytope $\Delta$. For a polytope $\Delta$ its corresponding labeled graph $\Gamma_\Delta$ is the graph whose vertices are the vertices of
$\Delta$ and two vertices $v$ and $w$ are connected if they are
connected by an edge in the polytope and the label $e(v,w)$ is the
shortest integral vector in the direction of the vector $w-v$.

We would like to see for which subgraphs $\Gamma'$, we have $(\Gamma_\Delta, \Gamma')$ is a Chinese remainder pair. The following example demonstrates that this is not always the case.
\begin{Ex}
Let $\Delta$ be the unit cube in $\r^3$. Let $\Gamma'$ be the
subgraph induced by the vertices $v_1 = (0,0,0), v_2 = (0,0,1),
v_3=(0,1,1), v_4=(1,1,1), v_5=(1,1,0)$. It is a connected subgraph.
Now consider the collection of polynomials $g_i \in \c[x,y,z]$
defined by $g_1 = g_2 = g_3 = g_4 = x$ and $g_5 = x+z$. One can
check that if we assign $g_i$ to the vertex $v_i$ then $g = (g_1,
\ldots, g_5)$ defines an element of $H^*(\Gamma')$. From Theorem
\ref{th-CRT}, if the Chinese remainder problem $(\Gamma_\Delta,
\Gamma', g)$ has a solution then $g_5 - g_1$ must lie in the ideal
generated by $x$ and $y$. But $g_5 - g_1 = z \notin \langle x,y
\rangle$. This shows that the Chinese remainder problem
$(\Gamma_\Delta, \Gamma', g)$ does not have a solution.
\begin{figure}[h]
\includegraphics[width=6cm]{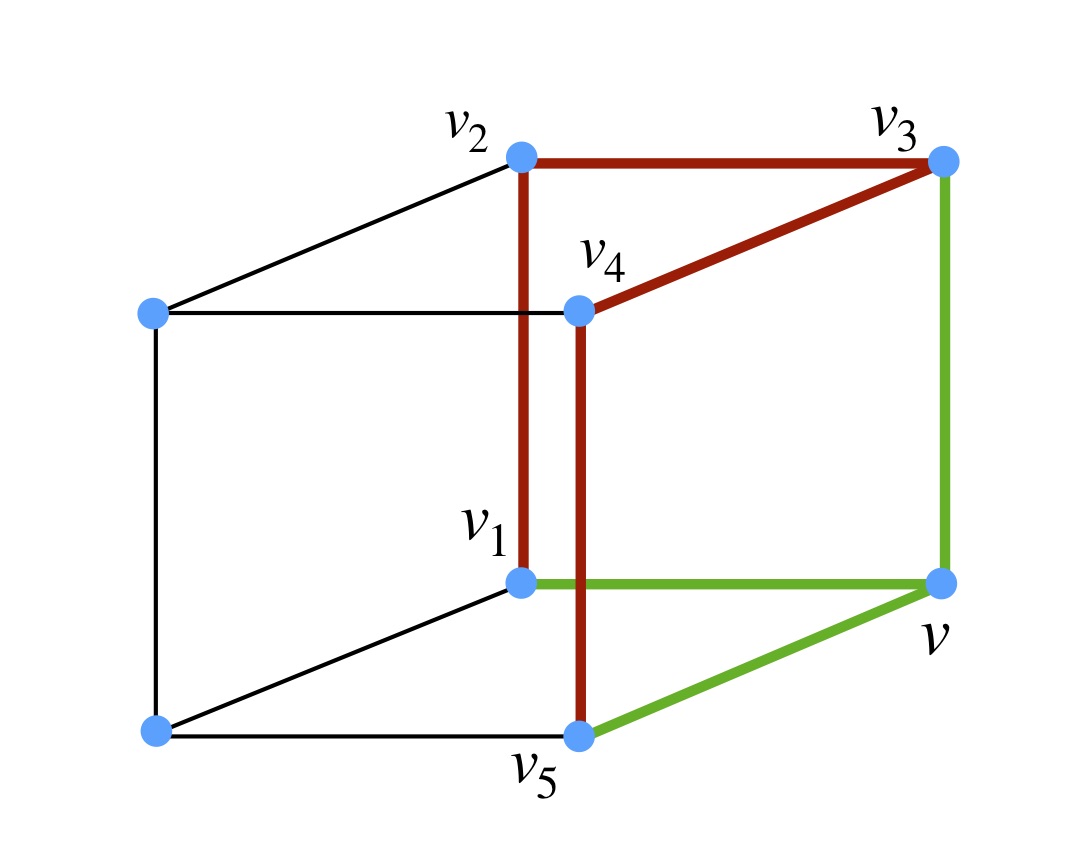}
\end{figure}
\end{Ex}

\begin{Def}
%A connected induced subgraph $\Gamma'$ of $\Gamma_\Delta$ is called
Let $\Gamma'$ be an induced subgraph of $\Gamma_\Delta$. We say that $\Gamma'$ is \emph{$2$-face connected} if the intersection of $\Gamma'$ with any $2$-dimensional face of $\Delta$ is connected. %By a $k$-dimensional face of $\Gamma_2$ we mean the vertices and edges of a $k$-dimensional face of $\Delta$ that lie in $\Gamma_2$.
\end{Def}

\begin{Prop} Let $\Gamma_1 \subset \Gamma_2$ be induced subgraphs of $\Gamma_\Delta$ such that $\Gamma_2$ is obtained from $\Gamma_1$ by adjoining one more vertex $v$. Moreover assume that $\Gamma_2$ is $2$-face connected. Then the restriction map $H^*(\Gamma_2) \to H^*(\Gamma_1)$ is surjective. 
\end{Prop}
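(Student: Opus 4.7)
The plan is to reduce, by Proposition~\ref{prop-CRT-subgraph}, to a congruence that must be verified at the single new vertex $v$. Since $\Gamma_2 = \Gamma_1 \cup \{v\}$, surjectivity of $H^*(\Gamma_2) \to H^*(\Gamma_1)$ is equivalent to asking that, for every $g \in H^*(\Gamma_1)$ and every pair of vertices $v_1, v_2 \in \Gamma_1$ adjacent to $v$ in $\Gamma_\Delta$ with edge labels $e_i = e(v, v_i)$,
$$g(v_1) - g(v_2) \in \langle e_1, e_2 \rangle.$$
I aim to establish this by realising $g(v_1) - g(v_2)$ as a telescoping sum of $g$-differences along a path in $\Gamma_1$ whose edges all lie in a single $2$-face of $\Delta$.

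The geometric input comes from the fact that $\Delta$ is simple (as $X_\Sigma$ is smooth): the two polytope edges $(v, v_1)$ and $(v, v_2)$ span a unique $2$-face $F$ of $\Delta$. The $1$-skeleton of $F$ is a polygonal cycle on which $v_1, v, v_2$ occur as three consecutive vertices, and every edge of $F$ is parallel to the $2$-plane spanned by $v - v_1$ and $v - v_2$; accordingly, every label of an edge of $F$ lies in the ideal $\langle e_1, e_2 \rangle$. By the argument of Proposition~\ref{prop-Euclidean-graph}, it therefore suffices to produce a path from $v_1$ to $v_2$ in $\Gamma_1$ all of whose edges lie in $F$.

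This is where the $2$-face connectedness of $\Gamma_2$ is brought to bear: the induced subgraph $\Gamma_2 \cap F$ is a connected subgraph of the cycle of $F$ that contains the three consecutive vertices $v_1, v, v_2$. The plan is to deduce that $\Gamma_2 \cap F$ must in fact be the entire cycle of $F$, so that $\Gamma_1 \cap F = F \setminus \{v\}$ is a polygonal arc inside $F$ joining $v_1$ to $v_2$ and sitting in $\Gamma_1$. The main obstacle is precisely this last combinatorial step: a bare connected arc of the cycle through $v_1, v, v_2$ would leave $v$ as an interior vertex, and deleting $v$ would disconnect $v_1$ from $v_2$ inside $\Gamma_1 \cap F$. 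So the crux is to exploit the $2$-face connectedness hypothesis in its full strength to rule out this proper-arc configuration, after which telescoping the congruences of $g$ along the polygonal arc $F \setminus \{v\}$ delivers the desired membership in $\langle e_1, e_2 \rangle$ and completes the verification of the hypothesis of Proposition~\ref{prop-CRT-subgraph}.
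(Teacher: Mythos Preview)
Your approach is exactly the paper's: invoke Proposition~\ref{prop-Euclidean-graph} by producing, for each pair $v_1,v_2\in\Gamma_1$ adjacent to $v$, a path from $v_1$ to $v_2$ inside $\Gamma_1$ whose edges lie in the $2$-face $F$ of $\Delta$ spanned by the polytope edges $(v,v_1)$ and $(v,v_2)$. You have also correctly isolated the one nontrivial point: $2$-face connectedness of $\Gamma_2$ only says that $\Gamma_2\cap F$ is connected, and if $\Gamma_2\cap F$ happens to be a proper arc of the boundary cycle of $F$ with $v$ in its interior, deleting $v$ disconnects $v_1$ from $v_2$ in $\Gamma_1\cap F$.

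That obstacle is not a gap in your reasoning but in the stated hypothesis: the proper-arc configuration cannot be ruled out, and the proposition is false as written. Take $\Delta$ to be the unit square in $\r^2$ with vertices $a=(0,0)$, $b=(1,0)$, $c=(1,1)$, $d=(0,1)$; let $\Gamma_1$ be induced by $\{a,c\}$ (no edges) and $\Gamma_2$ by $\{a,b,c\}$. The only $2$-face is $\Delta$ itself, and $\Gamma_2$ is a connected path, hence $2$-face connected. Yet $(0,1)\in H^*(\Gamma_1)=R\times R$ does not extend to $H^*(\Gamma_2)$: one would need $f_b\in\langle x\rangle$ with $f_b-1\in\langle y\rangle$, forcing $1\in\langle x,y\rangle$. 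The intended hypothesis, consistent with how Corollary~\ref{cor-seq-vertices} is phrased and used, is that $\Gamma_1$ be $2$-face connected; under that assumption $v_1,v_2\in\Gamma_1\cap F$ are already joined by a path in $\Gamma_1\cap F$, your telescoping argument goes through without any further combinatorial step, and the paper's one-line appeal to Proposition~\ref{prop-Euclidean-graph} is justified.
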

\begin{proof}
The proposition follows immediately from the definition of $2$-face connected subgraph and Proposition \ref{prop-Euclidean-graph}. 
\end{proof}

\begin{Cor}   \label{cor-seq-vertices}
Let $\Gamma' = (V', E')$ be an induced subgraph of $\Gamma_\Delta$. Let $v_1, \ldots, v_r$ be a sequence of vertices in $\Gamma_\Delta$ and let $\Gamma_i$ be the subgraph induced by $V' \cup \{v_1, \ldots, v_i\}$ with $\Gamma_0 = \Gamma'$ and $\Gamma_r = \Gamma_\Delta$. Suppose for $i=0, \ldots, r-1$,  $\Gamma_i$ is $2$-face connected. Then the restriction map $H^*(\Gamma_\Delta) \to H^*(\Gamma')$ is surjective.
\end{Cor}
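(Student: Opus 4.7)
The plan is to proceed by a direct induction on $r$, using the preceding proposition as the single-step case. The chain
$$\Gamma' = \Gamma_0 \subset \Gamma_1 \subset \cdots \subset \Gamma_r = \Gamma_\Delta$$
is built so that each successive inclusion $\Gamma_{i-1} \subset \Gamma_i$ differs by exactly one vertex, namely $v_i$, and each intermediate $\Gamma_i$ is an induced subgraph of $\Gamma_\Delta$ by construction. This is precisely the setup of the previous proposition.

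For each $i \in \{1, \ldots, r\}$ I would invoke the preceding proposition on the pair $\Gamma_{i-1} \subset \Gamma_i$. The $2$-face connectedness assumption furnished by the hypothesis is exactly what that proposition requires in order to conclude that the restriction map
$$H^*(\Gamma_i) \longrightarrow H^*(\Gamma_{i-1})$$
is surjective. (The indexing on the hypothesis is aligned so that whichever of $\Gamma_{i-1}$ or $\Gamma_i$ plays the role of the $2$-face connected graph in the preceding proposition, the required condition is in hand.)

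Composing these surjections for $i = r, r-1, \ldots, 1$ yields a surjection
$$H^*(\Gamma_\Delta) = H^*(\Gamma_r) \twoheadrightarrow H^*(\Gamma_{r-1}) \twoheadrightarrow \cdots \twoheadrightarrow H^*(\Gamma_1) \twoheadrightarrow H^*(\Gamma_0) = H^*(\Gamma'),$$
which is the desired conclusion. There is no substantive obstacle: the corollary is a formal iteration of the preceding proposition along the filtration, and the only thing one needs to verify at each step is that the relevant $2$-face connectedness hypothesis holds, which is exactly what the statement of the corollary stipulates.
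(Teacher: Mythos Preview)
Your proposal is correct and matches the paper's approach exactly: the paper states this as an immediate corollary with no written proof, the intended argument being precisely the iteration of the preceding proposition along the chain $\Gamma_0 \subset \cdots \subset \Gamma_r$. Your parenthetical hedge about indexing is reasonable, since the hypothesis on $\Gamma_0,\ldots,\Gamma_{r-1}$ together with the automatic $2$-face connectedness of $\Gamma_r = \Gamma_\Delta$ covers both readings of the proposition.
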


Let $\xi$ be a $1$-parameter subgroup of $T$. We regard $\xi$ as a linear function on the vector space $M_\r = M \otimes \r$ where $M$ is the lattice of characters of $T$. We assume $\xi$ is in
general position with respect to $\Delta$, i.e. $\xi$ is not orthogonal to any face of $\Delta$. The linear function $\xi$ is analogue of a Morse function on $\Delta$. %\cite{Morse}. %Moreover the composition of the moment map $\mu: X \to \Lie(T^*_{\r})$ with $\xi$ defines a Morse function $F_\xi$ on $X$.
\begin{Cor}
Fix a constant number $c$. Let $Y$ be a $T$-invariant subvariety of $X_\Sigma$ whose GKM graph $\Gamma_Y$ is induced by all the vertices $v$ with $\xi(v)
\leq c$. Then $\Gamma_Y$ satisfies the condition in Corollary \ref{cor-seq-vertices} and hence the restriction map $H^*_T(X_\Sigma) \to H^*_T(Y)$ is
surjective.
\end{Cor}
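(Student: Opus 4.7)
The plan is to apply Corollary \ref{cor-seq-vertices} after producing a convenient ordering of the vertices of $\Delta$ outside $Y$. First I would replace $\xi$ by a small perturbation $\xi'$, still in general position with respect to $\Delta$, which in addition takes distinct values at all vertices while preserving the sign of $\xi(v) - c$ at every vertex; since the set of vectors in general position is open and we may assume $c$ was chosen so that $\xi(v) \neq c$ for every vertex $v$, such a $\xi'$ exists and leaves $\Gamma_Y$ unchanged. I would then list the vertices of $\Delta$ with $\xi'(v) > c$ as $v_1, \ldots, v_r$ in strictly increasing order $\xi'(v_1) < \cdots < \xi'(v_r)$, and let $\Gamma_i$ be the induced subgraph of $\Gamma_\Delta$ on the vertex set $V(\Gamma_Y) \cup \{v_1, \ldots, v_i\}$, so $\Gamma_0 = \Gamma_Y$ and $\Gamma_r = \Gamma_\Delta$.

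The heart of the argument is to verify that each $\Gamma_i$ is $2$-face connected. Fix a $2$-face $F$ of $\Delta$. By construction, $\Gamma_i \cap V(F) = \{w \in V(F) : \xi'(w) \leq t_i\}$, where $t_0 = c$ and $t_i = \xi'(v_i)$ for $i \geq 1$. Since $F$ is a convex $2$-polytope and $\xi'$ is not orthogonal to $F$ or any of its edges, the sublevel set $\{x \in F : \xi'(x) \leq t_i\}$ is a convex subregion of $F$, and its intersection with $\partial F$ is a (possibly empty) arc of the boundary cycle. Therefore the vertices of $F$ in this arc form a connected subpath of $\partial F$, and since $\Gamma_i$ is an induced subgraph while the edges of $\Gamma_\Delta$ lying in $F$ are precisely the boundary edges of the polygon $F$, the subgraph $\Gamma_i \cap F$ is either empty or a connected path. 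This gives $2$-face connectedness.

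Having established this, Corollary \ref{cor-seq-vertices} yields surjectivity of the restriction $H^*(\Gamma_\Delta) \to H^*(\Gamma_Y)$; combining with the Goresky-Kottwitz-MacPherson identifications $H^*_T(X_\Sigma) \cong H^*(\Gamma_\Delta)$ and $H^*_T(Y) \cong H^*(\Gamma_Y)$ delivers the desired conclusion. The only substantive technical point is the planar lemma underlying the $2$-face check, namely that a generic linear function on a convex $2$-polytope has a unique vertex minimum and maximum with values strictly monotonic along each of the two boundary arcs joining them, so that sublevel sets always restrict to connected arcs of vertices on $\partial F$; modulo this, everything is a direct application of the framework built in the previous sections.
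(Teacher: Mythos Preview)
Your proof is correct and follows essentially the same approach as the paper: order the vertices outside $\Gamma_Y$ by the value of the linear functional, and verify $2$-face connectedness of each $\Gamma_i$ using that a half-plane meets the boundary of a convex polygon in a single arc. Your perturbation step to guarantee distinct values of $\xi'$ at the vertices is a small technical refinement that the paper leaves implicit.
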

\begin{proof}
Let $\{w_1, \ldots, w_r\}$ be vertices not in $\Gamma_Y$ and ordered such that $\xi(w_1) < \cdots < \xi(w_r)$. For each $i$ let $\Gamma_i$ be the graph induced by $\Gamma_Y$ and $\{v_1, \ldots, v_i\}$. Since $\Delta$ is
convex, the intersection of $\Delta$ and the half-space
$\xi^{-1}((-\infty, c])$ is connected. Thus the graph $\Gamma_i$ induced by the vertices $v$ with $\xi(v) \leq \xi(v_i)$ is also connected. As for
the $2$-face connectedness of $\Gamma_i$, let $u_1, u_2 \in \Gamma_i$ belong to the same $2$-dimensional face $\sigma$. We want to show that $u_1$ and $u_2$ are connected by a path in $\Gamma_i \cap
\sigma$. Let $P$ be the $2$-dimensional plane through the face $\sigma$. The set of points $x \in P$ defined by the inequality $\xi(x) \leq \xi(v_i)$ is a half-plane $H$ in $P$. There are two paths in
$\sigma$ joining $u_1$ and $u_2$. Since $u_1,u_2$ are in $H$ and $\sigma$ is convex, one of these paths should completely lie in $H$. This proves the proposition.
\end{proof}

\section{Example: Bruhat graph and the flag variety}
Let $G$ be a connected reductive algebraic group with Borel subgroup  $B$ and  maximal torus 
$T\subset B$. It is well-known that the flag variety $G/B$ is a GKM $T$-variety with the left action of $T$. The GKM graph $\Gamma$ of $G/B$ is identical to the Bruhat graph $\Gamma_G$ of $(G,T)$. As a labelled graph,  $\Gamma_G$ has vertex set  
the Weyl group $W$ and directed edges corresponding to ordered pairs $(u,w)\in W\times W$
where $uw^{-1}$ is a reflection $r$ and $\ell(u)< \ell(w)$. If $r$ is the reflection
corresponding to the positive root $\alpha$, then the directed edge has label $\alpha$. 
We will denote this edge by $u \xrightarrow{\alpha} w$

Let $Y$ be a $T$-invariant subvariety of $G/B$ whose GKM graph $\Gamma' = (W', E')$ is an induced subgraph of the Bruhat graph $\Gamma = \Gamma_G$. Let $W \setminus W' = \{w_1, \ldots w_r\}$ and let $\Gamma_i$ be the graph induced by $W' \cup \{w_1, \ldots, w_i\}$ with $\Gamma_0 = \Gamma'$ and $\Gamma_r = \Gamma$. 

The following is an immediate corollary of Proposition \ref{prop-Euclidean-graph}. 
\begin{Cor}   \label{cor-Bruhat-graph}
With notation as above, suppose the following hold for every $i=1, \ldots r$: whenever $u_1, u_2 \in W_{i-1}$ with $u_1 \xrightarrow{\alpha_1} w_i$ and $u_2  \xrightarrow{\alpha_2} w_i$, there is $w' \in W_{i-1}$ such that $w' \xrightarrow{\alpha'_1} u_1$ and $w' \xrightarrow{\alpha'_2} u_2$ where $\alpha'_1, \alpha'_2$ are in the linear span of $\alpha_1,  \alpha_2$. Then the restriction map $i^*: H^*(G/B, \c) \to H^*(Y, \c)$ is surjective. 
\end{Cor}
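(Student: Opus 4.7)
The approach is to apply Proposition \ref{prop-CRT-subgraph} iteratively along the chain $\Gamma_0 \subset \Gamma_1 \subset \cdots \subset \Gamma_r$. By the Goresky-Kottwitz-MacPherson theorem, $H^*_T(G/B,\c) \cong H^*(\Gamma)$ and $H^*_T(Y,\c) \cong H^*(\Gamma')$, and the restriction map induced by $i$ corresponds to restriction of graph-cohomology classes from $V$ to $V'$. This restriction factors as $H^*(\Gamma) = H^*(\Gamma_r) \to H^*(\Gamma_{r-1}) \to \cdots \to H^*(\Gamma_0) = H^*(\Gamma')$, so it suffices to show that each $(\Gamma_i,\Gamma_{i-1})$ is a Chinese remainder pair in the sense of the discussion following Proposition \ref{prop-CRT-subgraph}.

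Fix $i$ and a class $g\in H^*(\Gamma_{i-1})$. By Proposition \ref{prop-CRT-subgraph}, the Chinese remainder problem for $(\Gamma_i,\Gamma_{i-1},g)$ admits a solution provided that $g(u_1) - g(u_2) \in \langle \alpha_1, \alpha_2 \rangle$ for every pair $u_1, u_2 \in W_{i-1}$ adjacent to $w_i$ in $\Gamma_i$ with labels $\alpha_1, \alpha_2$. The hypothesis of the corollary supplies a vertex $w' \in W_{i-1}$ together with edges $w' \xrightarrow{\alpha'_1} u_1$ and $w' \xrightarrow{\alpha'_2} u_2$ in $\Gamma_{i-1}$, where $\alpha'_1, \alpha'_2 \in \Span(\alpha_1,\alpha_2)$. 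Since $g$ is a cohomology class on $\Gamma_{i-1}$, the compatibility conditions give $g(u_j) - g(w') \in \langle \alpha'_j \rangle$ for $j=1,2$. Subtracting yields
\[
g(u_1) - g(u_2) \;\in\; \langle \alpha'_1, \alpha'_2 \rangle \;\subseteq\; \langle \alpha_1,\alpha_2 \rangle,
\]
which is precisely the graph-theoretic analogue, in the Bruhat setting, of the ``path in the plane through $v, v_1, v_2$'' condition used in the proof of Proposition \ref{prop-Euclidean-graph}; here the role of that plane is played by the linear span of $\alpha_1$ and $\alpha_2$.

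Iterating over $i = 1, \ldots, r$ yields surjectivity of the equivariant restriction $H^*_T(G/B,\c) \to H^*_T(Y,\c)$. To descend to ordinary cohomology, one uses that both $G/B$ and the $T$-invariant subvariety $Y$ are equivariantly formal (this is standard in the GKM setting in which the corollary is formulated), so each side is the quotient of its equivariant counterpart by the augmentation ideal of $H^*_T(\textup{pt})$; since the restriction map is $H^*_T(\textup{pt})$-linear it descends to a surjection $H^*(G/B,\c) \to H^*(Y,\c)$. The only genuine content is the verification of the Chinese remainder condition at each stage, and the hypothesis is tailored exactly to make this automatic, so I do not expect any serious obstacle beyond this bookkeeping.
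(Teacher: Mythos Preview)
Your argument is correct and follows the same route as the paper, which simply declares the result an immediate corollary of Proposition~\ref{prop-Euclidean-graph}; you have merely unpacked that proposition's proof (the reduction to Proposition~\ref{prop-CRT-subgraph} and the observation that labels in $\Span(\alpha_1,\alpha_2)$ force $g(u_1)-g(u_2)\in\langle\alpha_1,\alpha_2\rangle$) in the Bruhat-graph context. Your added paragraph on passing from equivariant to ordinary cohomology via equivariant formality is a reasonable elaboration that the paper leaves implicit.
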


From Corollary \ref{cor-Bruhat-graph} one can recover the known facts that the restriction map from the cohomology of $G/B$ to a Richardson variety (in particular a Schubert variety) is surjective. 

%Let $G$ be a connected reductive algebraic group with $B$ a Borel subgroup and $T$ a maximal torus in $B$. It is well-known that the flag variety $G/B$ is a GKM $T$-variety with the left action of $T$. The GKM graph of $G/B$ is the (strong) Bruhat graph $\Gamma = \Gamma_G$ of $G$. The vertex set of $\Gamma$ is the Weyl group $W$. For $u, w \in W$ and a positive root $\alpha$ we write $u \xrightarrow{\alpha} w$ if $\ell(w) = \ell(u) + 1$ and $w = s_\alpha u$. In this case $(u, w)$ is an edge in the GKM graph with label $\alpha$.

%Let $Y$ be a $T$-invariant subvariety of $G/B$ whose GKM graph $\Gamma' = (W', E')$ is an induced subgraph of the Bruhat graph $\Gamma = \Gamma_G$. Let $W \setminus W' = \{w_1, \ldots w_r\}$ and let $\Gamma_i$ be the graph induced by $W' \cup \{w_1, \ldots, w_i\}$ with $\Gamma_0 = \Gamma'$ and $\Gamma_r = \Gamma$. 

%The following is an immediate corollary of Proposition \ref{prop-Euclidean-graph}. 
%\begin{Cor}   \label{cor-Bruhat-graph}
%With notation as above, suppose the following hold for every $i=1, \ldots r$: whenever $u_1, u_2 \in W_{i-1}$ with $u_1 \xrightarrow{\alpha_1} w_i$ and $u_2  \xrightarrow{\alpha_2} w_i$, there is $w' \in W_{i-1}$ such that $w' \xrightarrow{\alpha_2} u_1$ and $w' \xrightarrow{\alpha_1} u_2$. Then the restriction map $i^*: H^*(G/B, \c) \to H^*(Y, \c)$ is surjective. 
%\end{Cor}

%From Corollary \ref{cor-Bruhat-graph} one can recover the known facts that the restrictions maps from the cohomology of $G/B$ to a Richardson variety (in particular a Schubert variety) is surjective. 

\end{document}